\newcommand\@dotsep{4.5}
\def\@tocline#1#2#3#4#5#6#7{\relax
  \ifnum #1>\c@tocdepth 
  \else
    \par \addpenalty\@secpenalty\addvspace{#2}%
    \begingroup \hyphenpenalty\@M
    \@ifempty{#4}{%
      \@tempdima\csname r@tocindent\number#1\endcsname\relax
    }{%
      \@tempdima#4\relax
    }%
    \parindent\z@ \leftskip#3\relax
    \advance\leftskip\@tempdima\relax
    \rightskip\@pnumwidth plus1em \parfillskip-\@pnumwidth
    #5\leavevmode\hskip-\@tempdima #6\relax
    \leaders\hbox{$\m@th
      \mkern \@dotsep mu\hbox{.}\mkern \@dotsep mu$}\hfill
    \hbox to\@pnumwidth{\@tocpagenum{#7}}\par
    \nobreak
    \endgroup
  \fi}
\let\oldtocsection=\tocsection
\let\oldtocsubsection=\tocsubsection
\renewcommand{\tocsection}[2]{\hspace{0em}\oldtocsection{#1}{#2}}
\renewcommand{\tocsubsection}[2]{\hspace{22pt}\oldtocsubsection{#1}{#2}}
\newcommandx{\yaHelper}[2][1=\empty]{%
\ifthenelse{\equal{#1}{\empty}}%
  { \ensuremath{ \scriptstyle{ #2 } } } 
  { \raisebox{ #1 }[0pt][0pt]{ \ensuremath{ \scriptstyle{ #2 } } } }  
}   
\newcommandx{\yrightarrow}[4][1=\empty, 2=\empty, 4=\empty, usedefault=@]{%
  \ifthenelse{\equal{#2}{\empty}}
  { \xrightarrow{ \protect{ \yaHelper[ #4 ]{ #3 } } } } 
  { \xrightarrow[ \protect{ \yaHelper[ #2 ]{ #1 } } ]{ \protect{ \yaHelper[ #4 ]{ #3 } } } } 
}
\definecolor{darkgreen}{RGB}{0, 153, 51}
\definecolor{violet}{RGB}{112, 73, 170}
\definecolor{darkred}{RGB}{153, 0, 0}
\newcommand{\N}{\mathbb{N}}
\newcommand{\Aa}{\mathcal{A}}
\newcommand{\Ba}{\mathcal{B}}
\newcommand{\FF}{\mathcal{F}}
\newcommand{\LL}{\mathscr{L}}
\newcommand{\e}{\varepsilon}
\newcommand{\w}{\widetilde}
\newcommand{\n}[1]{\|#1\|}
\newcommand{\nn}[1]{{\vert\kern-0.25ex\vert\kern-0.25ex\vert #1 
    \vert\kern-0.25ex\vert\kern-0.25ex\vert}}
\newcommand{\lnn}[1]{{\left\vert\kern-0.25ex\left\vert\kern-0.25ex\left\vert #1 
    \right\vert\kern-0.25ex\right\vert\kern-0.25ex\right\vert}}
\renewcommand{\leq}{\leqslant}
\renewcommand{\geq}{\geqslant}
\newcommand{\cs}{{\rm C}$^\ast$}
\newcommand{\mdef}{\mathsf{m}\mbox{-}\mathrm{def}}
\newcommand{\adef}{\mathsf{a}\mbox{-}\mathrm{def}}
\newcommand{\usim}{\,\raise.17ex\hbox{$\scriptstyle\mathtt{\sim}$}}
\newtheorem{theorem}{Theorem}
\newtheorem{lemma}[theorem]{Lemma}
\newtheorem{proposition}[theorem]{Proposition}
\newtheorem{corollary}[theorem]{Corollary}
\theoremstyle{definition}
\theoremstyle{remark}
\newtheorem*{remark}{Remark}
\numberwithin{equation}{section}
\title[Almost multiplicative maps]{Almost multiplicative maps with respect to almost associative operations on Banach algebras}
\subjclass[2020]{Primary 39B82, 47B48, Secondary 46M18, 43A07}
\author{Tomasz Kochanek}
\address{Institute of Mathematics, University of Warsaw, Banacha~2, 02-097 Warsaw, Poland}
\email{tkoch@mimuw.edu.pl}
\keywords{Ulam stability, amenable Banach algebra, AMNM pair, approximately multiplicative map.}
\thanks{This work has been supported by the National Science Centre grant no. 2020/37/B/ST1/01052.}
\begin{document}
\maketitle

\begin{center}
{\it Dedicated to Professors Maciej Sablik and L\'aszl\'o Sz\'ekelyhidi\\ on the occasion of their 70$^{\,th}$ birthday.}
\end{center}

\begin{abstract}
We prove an Ulam type stability result for a~non-associative version of the multiplicativity equation, that is, $T(xy)=\Psi(T(x),T(y))$, where $T$ is a~unital bounded operator acting from an~amenable Banach algebra to a~dual Banach algebra.
\end{abstract}
\maketitle

\section{Introduction}
\noindent
Let $\Aa$ and $\Ba$ be Banach algebras, and let $\LL^n(\Aa,\Ba)$ be the space of $n$-linear bounded operators from $\Aa^n$ to $\Ba$. In this note, we study an~Ulam type stability problem for a~non-associative version of the multiplicativity equation, namely, 
\begin{equation}\label{E}
T(xy)=\Psi(T(x),T(y))\qquad (x,y\in\Aa),
\end{equation}
where $T\in\LL^1(\Aa,\Ba)$ and $\Psi\in\LL^2(\Ba,\Ba)$. An~expected stability effect should be quite different than the one for almost multiplicative maps. Indeed, $\Psi$ being merely `almost' (but not exactly) associative on the range of $T$ means that \eqref{E} cannot be satisfied accurately, since the left-hand side yields an~associative operation on $(x,y)$. Hence, the best accuracy with which equation \eqref{E} can be satisfied should be expressed in terms of the `defect' of $\Psi$ concerning its associativity.

Our main result says, roughly speaking, that for certain Banach algebras, any operator $T$ satisfying \eqref{E} with sufficient accuracy can be approximated to a~prescribed accuracy by an~operator for which the obvious obstacle of non-associativity of $\Psi$ is actually the only obstacle on the way of satisfying \eqref{E}.

To be more precise, for any linear map $T\colon\Aa\to\Ba$ and any bilinear map $\Psi\colon\Ba\times\Ba\to\Ba$, consider $T^{\,\vee}\colon\Aa^2\to\Ba$ defined by
$$
T^{\,\vee}(x,y)=T(xy)-\Psi(T(x),T(y)).
$$
We define the $\Psi$-{\it multiplicative defect} of $T$ as $\n{T^{\,\vee}}$, that is,
$$
\mdef_\Psi(T)=\sup\big\{\n{T(xy)-\Psi(T(x),T(y))}\colon x,y\in\Aa,\, \n{x}, \n{y}\leq 1\big\}.
$$
Similarly, we define the {\it associative defect} of $\Psi$ by 
$$
\adef(\Psi)=\sup\big\{\n{\Psi(u,\Psi(v,w))-\Psi(\Psi(u,v),w)}\colon u,v,w\in\Ba,\, \n{u}, \n{v}, \n{w}\leq 1\big\}.
$$
Our main result then reads as follows.
\begin{theorem}\label{main_T}
Let $\Aa$ be a unital amenable Banach algebra and $\Ba$ be a~unital dual Banach algebra with an~isometric predual. Then for arbitrary $K,L\geq 1$ and $\e,\eta\in (0,1)$ there exists $\delta>0$ such that the following holds true: If $\Psi\in\LL^2(\Ba,\Ba)$ satisfies $\n{\Psi}\leq L$ and $\Psi(1,u)=\Psi(u,1)=u$ for every $u\in\Ba$, and $T\in\LL^1(\Aa,\Ba)$ is a~unital operator with $\n{T}\leq K$ and $\mdef_\Psi(T)\leq\delta$, then there exists a~unital $S\in\LL^1(\Aa,\Ba)$ such that
$$
\mdef_\Psi(S)<\adef(\Psi)^{1-\eta}\quad\mbox{ and }\quad\n{T-S}<\e.
$$
\end{theorem}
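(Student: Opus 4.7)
The plan is to adapt the classical amenability-based AMNM argument (Johnson, Bade--Curtis--Dales, Jarosz) to the non-associative setting, with $\adef(\Psi)$ playing the role of an irreducible floor below which $\mdef_\Psi$ cannot be driven. The first step is to give $\Ba$ the structure of an almost Banach $\Aa$-bimodule using $T$ and $\Psi$, via $x\cdot u:=\Psi(T(x),u)$ and $u\cdot x:=\Psi(u,T(x))$. These actions are bounded, weak$^{\ast}$-continuous in the $\Ba$-variable (thanks to the isometric predual of $\Ba$), and satisfy the associativity identity only up to $L\delta+\adef(\Psi) K^2$: the defect $(xy)\cdot u-x\cdot(y\cdot u)$ equals $\Psi(T^{\,\vee}(x,y),u)+[\Psi(\Psi(T(x),T(y)),u)-\Psi(T(x),\Psi(T(y),u))]$. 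A direct computation shows that $T^{\,\vee}$ is an approximate Hochschild 2-cocycle: its coboundary at $(x,y,z)$ equals $\Psi(\Psi(T(x),T(y)),T(z))-\Psi(T(x),\Psi(T(y),T(z)))$, bounded in norm by $\adef(\Psi) K^3$. The unital hypotheses on $T$ and $\Psi$ force $T^{\,\vee}(1,\cdot)=T^{\,\vee}(\cdot,1)=0$, a normalization we will preserve throughout.

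The core step is the quantitative splitting of this almost-cocycle. Using a bounded virtual diagonal $M\in(\Aa\wh\otimes\Aa)^{\ast\ast}$ for $\Aa$ (guaranteed by amenability) together with weak$^{\ast}$-integration of $\Ba$-valued bilinear maps against $M$ (from the isometric predual of $\Ba$), I would construct $\phi\in\LL^1(\Aa,\Ba)$ modelled on the classical formula $\phi(x)=\bigl\langle M,\,(a\otimes b)\mapsto a\cdot T^{\,\vee}(b,x)\bigr\rangle$, so that $\n{\phi}=O(\n{M}\delta)$, $\phi(1)=0$, and
\[
\bar\partial\phi(x,y):=\Psi(T(x),\phi(y))-\phi(xy)+\Psi(\phi(x),T(y))=-T^{\,\vee}(x,y)+R(x,y),
\]
where $\n{R}=O\!\bigl((\adef(\Psi)+L\delta)\n{M}\bigr)$. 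This is a quantitative version of the vanishing of $H^2(\Aa,\text{dual bimodule})$, the residual $R$ accounting for every rearrangement of parentheses in the standard Hochschild calculation: each such rearrangement now introduces an $O(\adef(\Psi)+L\delta)$ error because of the approximate associativity of both $\Psi$ and the bimodule action.

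Define $S_1:=T-\phi$; a direct expansion yields
\[
S_1^{\,\vee}(x,y)=T^{\,\vee}(x,y)+\bar\partial\phi(x,y)-\Psi(\phi(x),\phi(y))=R(x,y)-\Psi(\phi(x),\phi(y)),
\]
so $\mdef_\Psi(S_1)\leq\n{R}+L\n{\phi}^2=O\!\bigl((\adef(\Psi)+L\delta)\n{M}+L\n{M}^2\delta^2\bigr)$, while $\n{T-S_1}=O(\n{M}\delta)$ and $S_1$ is unital (since $\phi(1)=0$). Iterating with $S_n$ in place of $T$ (and updating the bimodule action), the quadratic self-interaction term $\Psi(\phi_n,\phi_n)$ drives $\mdef_\Psi(S_n)$ down toward the $\adef(\Psi)$-floor; stopping after $N=N(K,L,\eta,\n{M})$ iterations yields $\mdef_\Psi(S_N)<\adef(\Psi)^{1-\eta}$. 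Choosing $\delta$ sufficiently small in terms of $K,L,\e,\eta,\n{M}$ keeps the cumulative displacement $\n{T-S_N}\leq\sum_{n<N}\n{\phi_n}$ below $\e$.

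The principal obstacle is the quantitative splitting step: the classical derivation of $\bar\partial\phi=-T^{\,\vee}$ from the virtual-diagonal formula uses the exact cocycle identity $\partial T^{\,\vee}=0$ and the exact bimodule identities, and each breakdown of these in our setting produces an $O(\adef(\Psi)+L\delta)$ residual inside the averaging against $M$. Bounding the total residual uniformly by $O((\adef(\Psi)+L\delta)\n{M})$ requires careful bookkeeping of the two defect sources through every parenthesization in the computation. The exponent $1-\eta$ (rather than $1$) in the final bound then reflects the trade-off between the quadratic contraction of the $\delta$-iteration and the linear accumulation of $R$, optimized in the choice of $N$.
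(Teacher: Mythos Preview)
Your overall architecture is exactly the paper's: turn $\Ba$ into an approximate $\Aa$-bimodule via $T$ and $\Psi$, note that $T^{\,\vee}$ is an approximate $2$-cocycle, average against a bounded (virtual) diagonal to produce a correction $\phi=\mathsf{J}T^{\,\vee}$, and iterate the Newton--Raphson step. However, two points in your sketch are genuine gaps, not merely bookkeeping.

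First, your residual estimate $\n{R}=O\bigl((\adef(\Psi)+L\delta)\n{M}\bigr)$ is too crude on the $L\delta$ side, and as written it kills the iteration. With that bound the recursion reads $\delta_{n+1}\leq C\,\adef(\Psi)+C'L\n{M}\,\delta_n+C''\delta_n^2$, and the coefficient $C'L\n{M}$ of the linear term is in general $\geq 1$, so nothing improves. The point you are missing is that every bimodule-associativity defect in the Hochschild computation is applied to a vector already of size $O(\delta)$ (it involves $T^{\,\vee}$ or $\phi$), so the $L\delta$ contribution to $R$ is in fact $O(L\delta^2)$. The paper isolates this precisely: in its Proposition~\ref{inverse_P} the $\mdef_\Psi(T)$-term appears as $2\,\mdef_\Psi(T)\,\n{\Psi}\,\n{R}$, which for $R=T^{\,\vee}$ is $2L\delta^2$, and the only first-order contributions are the $\adef(\Psi)$-terms. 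This refinement is what makes the recursion $\delta_{n+1}\leq C_1\delta_n^2+C_2\,\adef(\Psi)\,\delta_n+C_3\,\adef(\Psi)$ genuinely contracting.

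Second, your termination analysis does not yield the exponent $1-\eta$. Even with the correct recursion above, the iterates only approach a floor of order $C_3\,\adef(\Psi)$, and $C_3\,\adef(\Psi)<\adef(\Psi)^{1-\eta}$ fails when $\adef(\Psi)$ is not small compared to $C_3^{-1/\eta}$; moreover your claim that $N$ depends only on $K,L,\eta,\n{M}$ cannot be right, since for $\adef(\Psi)=0$ one may need arbitrarily many (or infinitely many) steps. The paper's device is to fix $\theta$ with $(1+\theta)^{-1}>1-\eta$ and iterate \emph{only while} $\adef(\Psi)\leq(\mdef_\Psi(T_n))^{1+\theta}$. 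In that regime all three terms on the right are $\leq C\delta_n^{1+\theta}$, so the recursion becomes the clean super-linear one $\delta_{n+1}\leq\tfrac12\delta^{-\theta}\delta_n^{1+\theta}$, and at the stopping time one has $\mdef_\Psi(T_N)<\adef(\Psi)^{1/(1+\theta)}<\adef(\Psi)^{1-\eta}$ automatically, with the total displacement controlled by the geometrically summable $\delta_n$. This threshold idea is the missing mechanism in your last paragraph.
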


\vspace*{1mm}
In order to put this result in wider context, recall that the theory of almost multiplicative maps between Banach algebras was developed by B.E.~Johnson in the~series of papers \cite{johnson_memoir}, \cite{johnson_d}, \cite{johnson_f}, \cite{johnson}, where he introduced the AMNM {\it property}. A~pair $(\Aa,\Ba)$ of Banach algebras has this property, provided that for any $K,\e>0$ there is $\delta>0$ such that for every $T\in\LL^1(\Aa,\Ba)$ satisfying $\n{T(xy)-T(x)T(y)}\leq\delta$ there exists a~multiplicative operator $S\in\LL^1(\Aa,\Ba)$ with $\n{T-S}<\e$. One of the brilliant ideas of Johnson was to define a~`Banach-algebraic' analogue of the Newton--Raphson approximation procedure which allowed him to show that if $\Aa$ is amenable and $\Ba$ is a~dual Banach algebra, then $(\Aa,\Ba)$ has the AMNM property (see \cite[Thm.~3.1]{johnson}). His result was generalized in the PhD thesis of Horv\'ath \cite{bence}, and widely developed in the~recent paper \cite{CHL} by Choi, Horv\'ath and Laustsen, where they exhibited a~large collection of AMNM pairs of algebras of bounded operators on Banach spaces. Many other Ulam type stability problems on Banach algebras were studied in recent years; see e.g. \cite{alaminos}, \cite{BOT}, \cite{choi}, \cite{kochanek}, \cite{KV}.

The proof of Theorem~\ref{main_T} closely follows the above-mentioned method of proving \cite[Thm.~3.1]{johnson} by using a~Newton--Raphson-like algorithm defined in terms of approximate diagonal. However, unlike in the case of Johnson's method, unless $\Psi$ is associative, our approximation process must terminate at certain point.

\section{Terminology and preparatory lemmas}

\noindent
Throughout this section, we fix unital Banach algebras $\Aa$ and $\Ba$, as well as operators $T\in\LL^1(\Aa,\Ba)$ and $\Psi\in\LL^2(\Ba,\Ba)$.

One of the main ideas of approximating almost multiplicative maps (see \cite{johnson} and \cite{CHL}) is based on modification of standard coboundary operators in the Hochschild cohomology theory, as defined in \cite[Ch.~2]{runde}, by introducing as a~parameter a~map $\phi\in\LL(\Aa,\Ba)$ playing the role of `approximate action' of $\Aa$ on $\Ba$. According to \cite[Def.~5.2]{CHL}, for any $n\in\N_0$, the $n$-{\it coboundary operator} $\partial^n_\phi\colon \LL^n(\Aa,\Ba)\to\LL^{n+1}(\Aa,\Ba)$ is defined by the formula
\begin{equation*}
\begin{split}    \partial^n_\phi\psi(a_1,\ldots,a_{n+1})=\phi(a_1)\psi(a_2,\ldots,a_{n+1}) +\sum_{j=1}^n (-1)^j &\psi(a_1,\ldots,a_ja_{j+1},\ldots,a_{n+1})\\[-1pt]
    &+(-1)^{n+1}\psi(a_1,\ldots,a_n)\psi(a_{n+1}).
\end{split}
\end{equation*}
In fact, in the study of almost multiplicative maps only the $2$-coboundary operator is relevant. For studying equation \eqref{E}, we introduce the following modification of $\partial_\phi^2$. Define $\delta_T^2\colon \LL^2(\Aa,\Ba)\to\LL^3(\Aa,\Ba)$ by the formula
$$
\delta^2_T\,\phi(x,y,z)=\Psi(T(x),\phi(y,z))-\phi(xy,z)+\phi(x,yz)-\Psi(\phi(x,y),T(z)).
$$
Observe that if $\Psi$ is `almost' associative, then the operator $T^{\,\vee}$ `almost' satisfies the relation $T^{\,\vee}\in\mathrm{ker}\,\delta_T^2$ in the following sense.
\begin{lemma}\label{almost_ker_L}
For any $T\in\LL^1(\Aa,\Ba)$ and $\Psi\in\LL^2(\Ba,\Ba)$, we have
$$
\n{\delta_T^2\, T^{\,\vee}}\leq\adef(\Psi)\cdot\n{T}^3.
$$
\end{lemma}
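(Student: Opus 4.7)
The plan is purely computational: expand $\delta_T^2 T^{\vee}(x,y,z)$ using the two definitions and observe that almost everything telescopes, leaving only the associator of $\Psi$ evaluated at $(T(x),T(y),T(z))$.

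More concretely, I would first substitute $\phi = T^{\vee}$ into the formula defining $\delta_T^2$, obtaining the four-term expression
\[
\Psi(T(x),T^{\vee}(y,z)) - T^{\vee}(xy,z) + T^{\vee}(x,yz) - \Psi(T^{\vee}(x,y),T(z)).
\]
Then I would expand each occurrence of $T^{\vee}$ according to $T^{\vee}(a,b) = T(ab) - \Psi(T(a),T(b))$, using the bilinearity of $\Psi$ in the first and fourth terms. This produces eight summands.

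The key observation is the cancellation pattern. The two terms $\pm T(xyz)$ (coming from expanding $T^{\vee}(xy,z)$ and $T^{\vee}(x,yz)$) cancel. The terms $\pm\Psi(T(xy),T(z))$ cancel (one from the second summand, one from expanding the fourth). Similarly the two occurrences of $\pm\Psi(T(x),T(yz))$ cancel. What remains is exactly
\[
\Psi(\Psi(T(x),T(y)),T(z)) - \Psi(T(x),\Psi(T(y),T(z))),
\]
which is the associator of $\Psi$ at $(T(x),T(y),T(z))$.

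To finish, I would bound the norm of this associator by $\adef(\Psi)\cdot\n{T(x)}\n{T(y)}\n{T(z)}$; this follows immediately from the definition of $\adef(\Psi)$ by a rescaling argument applied to the unit-ball vectors $T(x)/\n{T(x)}$ etc.\ (or, more simply, by noting that the trilinear map $(u,v,w)\mapsto \Psi(\Psi(u,v),w)-\Psi(u,\Psi(v,w))$ has norm equal to $\adef(\Psi)$). Taking the supremum over $\n{x},\n{y},\n{z}\leq 1$ then yields $\n{\delta_T^2 T^{\vee}} \leq \adef(\Psi)\cdot\n{T}^3$, as claimed. There is no real obstacle here; the only thing to be careful about is the bookkeeping of signs in the eight-term expansion, which is most cleanly organized by displaying the four expanded rows and striking out cancelling pairs.
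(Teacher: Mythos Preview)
Your proposal is correct and is essentially the same computation as the paper's proof: both expand $\delta_T^2 T^{\vee}(x,y,z)$, observe that all terms cancel except the associator $\Psi(\Psi(T(x),T(y)),T(z))-\Psi(T(x),\Psi(T(y),T(z)))$, and then bound this by $\adef(\Psi)\n{T}^3$. The only cosmetic difference is that the paper first collapses the two middle terms $-T^{\vee}(xy,z)+T^{\vee}(x,yz)$ before expanding the outer ones, whereas you expand all four at once; the cancellations and final estimate are identical.
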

\begin{proof}
Note that
\begin{equation*}
\begin{split}
[\delta_T^2\, T^{\,\vee}](x,y,z) &= \Psi(T(x),T^{\,\vee}(y,z))+\Psi(T(xy),T(z))\\
&\hspace*{64pt}-\Psi(T(x),T(yz))-\Psi(T^{\,\vee}(x,y),T(z))\\
&=\Psi[\Psi(T(x),T(y)),T(z)]-\Psi[T(x),\Psi(T(y),T(z))],
\end{split}
\end{equation*}
from which the result follows readily.
\end{proof}

\begin{lemma}\label{derivative_L}
The Fr\'echet derivative of the map
$$
\LL^1(\Aa,\Ba)\ni S\xmapsto[\phantom{xx}]{}S^{\,\vee}\in\LL^2(\Aa,\Ba)
$$
at the point $T$ is given by the formula
$$
[\mathsf{D}H](x,y)=H(xy)-\Psi(T(x),H(y))-\Psi(H(x),T(y))\qquad (H\in\LL^1(\Aa,\Ba)).
$$
\end{lemma}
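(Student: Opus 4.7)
The plan is to compute $(T+H)^{\,\vee}$ directly, separate the linear-in-$H$ terms from the quadratic ones, and check that the remainder is $o(\n{H})$, which automatically identifies the Fréchet derivative.

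First I would write, using bilinearity of $\Psi$ and linearity of $T$ and $H$,
\begin{align*}
(T+H)^{\,\vee}(x,y) &= (T+H)(xy)-\Psi\bigl((T+H)(x),(T+H)(y)\bigr)\\
&= T(xy)+H(xy)-\Psi(T(x),T(y))-\Psi(T(x),H(y))\\
&\quad -\Psi(H(x),T(y))-\Psi(H(x),H(y))\\
&= T^{\,\vee}(x,y)+\bigl[H(xy)-\Psi(T(x),H(y))-\Psi(H(x),T(y))\bigr]\\
&\quad -\Psi(H(x),H(y)).
\end{align*}
So the difference $(T+H)^{\,\vee}-T^{\,\vee}$ decomposes as a bounded linear expression in $H$, given exactly by the claimed formula for $[\mathsf{D}H]$, plus the remainder term $R(H)(x,y)=-\Psi(H(x),H(y))$.

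Next I would verify that $H\mapsto [\mathsf{D}H]$ is indeed a bounded linear operator from $\LL^1(\Aa,\Ba)$ to $\LL^2(\Aa,\Ba)$: linearity in $H$ is obvious, and the estimate $\n{[\mathsf{D}H]}\leq \n{H}+2\n{\Psi}\n{T}\n{H}$ follows from submultiplicativity of the norms on multilinear operator spaces.

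Finally I would show the remainder is $o(\n{H})$: since $\Psi$ is bounded bilinear,
$$
\n{R(H)(x,y)}\leq \n{\Psi}\n{H(x)}\n{H(y)}\leq \n{\Psi}\n{H}^2\n{x}\n{y},
$$
hence $\n{R(H)}\leq \n{\Psi}\n{H}^2$, which gives $\n{R(H)}/\n{H}\to 0$ as $\n{H}\to 0$. This confirms that $\mathsf{D}$ is the Fréchet derivative of $S\mapsto S^{\,\vee}$ at $T$. There is no real obstacle here; the statement is essentially the observation that $S\mapsto S^{\,\vee}$ is the sum of a bounded linear map ($S\mapsto S\circ m$, where $m$ is the multiplication of $\Aa$) and a bounded quadratic map ($S\mapsto \Psi\circ (S,S)$), whose derivatives are computed in the standard way.
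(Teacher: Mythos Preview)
Your proof is correct and follows essentially the same approach as the paper: both compute $(T+H)^{\,\vee}-T^{\,\vee}$ by expanding via bilinearity of $\Psi$, identify the linear part as the claimed $\mathsf{D}H$, and bound the remainder $\Psi(H(x),H(y))$ by $\n{\Psi}\n{H}^2$. You add a brief check that $H\mapsto\mathsf{D}H$ is bounded, which the paper leaves implicit, but the argument is otherwise identical.
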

\begin{proof}
It follows by a straightforward calculation:
\begin{equation*}
\begin{split}
[(T+H)^{\vee}-T^{\,\vee}](x,y) &=H(xy)-\Psi(T(x)+H(x),T(y)+H(y))+\Psi(T(x),T(y))\\
&=H(xy)-\Psi(T(x),H(y))-\Psi(H(x),T(y))-r(H)(x,y),
\end{split}
\end{equation*}
where $r(H)(x,y)=\Psi(H(x),H(y))$ and hence
\begin{equation}\label{R_norm}
\n{r(H)}=\n{(T+H)^{\,\vee}-T^{\,\vee}-\mathsf{D}H}\leq \n{\Psi}\cdot\n{H}^2.\qedhere
\end{equation}
\end{proof}

In what follows, we use standard notation and facts concerning projective tensor products of Banach spaces, as described e.g. in \cite[Ch.~2]{ryan}. By $\Aa\hat{\otimes}\Aa$ we denote the completion of $\Aa\otimes\Aa$ in the projective norm. Recall that for every $\Phi\in\LL^2(\Aa,\Ba)$ there exists a~unique operator $\Theta\in\LL^1(\Aa\hat{\otimes}\Aa,\Ba)$ such that $\Theta(a\otimes b)=\Phi(a,b)$ for all $a,b\in\Aa$ and, moreover,  $\n{\Phi}=\n{\Theta}$. From now on, we adopt the convention that for any $\Phi\in\LL^2(\Aa,\Ba)$, we write $\w\Phi$ for the corresponding linear operator from $\Aa\hat{\otimes}\Aa$ to $\Ba$. Let also $\pi_\Aa\colon \Aa\hat{\otimes}\Aa\to\Aa$ stand for the unique bounded linear map such that $\pi_\Aa(a\otimes b)=ab$ for all $a,b\in\Aa$.

Let us now recall some elementary facts concerning amenable Banach algebras; for a~nice exposition of this theory, see \cite{runde} and \cite{runde_new}. A~net $(\Delta_\alpha)_{\alpha\in A}\subset \Aa\hat{\otimes}\Aa$ is called an~{\it approximate diagonal}, provided that 
$$
\lim_\alpha(a\cdot\Delta_\alpha-\Delta_\alpha\cdot a)=0\quad\mbox{ and }\quad \lim_\alpha a\pi_\Aa(\Delta_\alpha)=a\quad\,\,(a\in\Aa).
$$
Note that it the case $\Aa$ is unital the latter condition means that $\lim_\alpha\pi_\Aa(\Delta_\alpha)=1$ in norm. The approximate diagonal $(\Delta_\alpha)_{\alpha\in A}$ is called {\it bounded} if $\sup_\alpha\n{\Delta}_\alpha<\infty$. Notice that, in our convention, $\pi_\Aa=\w\Lambda$, where $\Lambda(a,b)=ab$. Therefore, $\n{\pi_\Aa}=\n{\Lambda}=1$ and since $\lim_\alpha\pi_\Aa(\Delta_\alpha)=1$, we must have $\sup_\alpha\n{\Delta_\alpha}\geq 1$.

By a {\it virtual diagonal} for $\Aa$ we mean an~element $\mathsf{M}\in (\Aa\hat{\otimes}\Aa)^{\ast\ast}$ such that 
$$
a\cdot \mathsf{M}=\mathsf{M}\cdot a\quad\mbox{ and }\quad a\cdot \pi_\Aa^{\ast\ast}\mathsf{M}=a\quad (a\in\Aa).
$$
A~Banach algebra $\Aa$ is called {\it amenable} if there exists an~approximate diagonal $(\Delta_\alpha)_{\alpha\in A}$ for $\Aa$, and this is equivalent to saying that there is a~virtual diagonal $\mathsf{M}$ for $\Aa$; see \cite[Thm.~2.2.4]{runde}. Moreover, we can always assume that $\mathsf{M}$ is the weak$^\ast$ limit in $(\Aa\hat{\otimes}\Aa)^{\ast\ast}$ of the net $(\Delta_\alpha)_{\alpha\in A}$.

Concerning the codomain algebra in our stability problem, we need the following definition: A~Banach algebra $\Ba$ is called a~{\it dual Banach algebra} with {\it isometric predual} $X$, provided that $X$ is a~Banach space such that $\Ba$ and $X^\ast$ are isometrically isomorphic and multiplication in $\Ba$ is separately $\sigma(\Ba,X)$-continuous. In particular, $\Ba$ is equipped with the weak$^\ast$ topology and the condition of the continuity of multiplication is equivalent to saying that $i^\ast\circ\kappa(X)$ is a~sub-$\Ba$-bimodule of $\Ba^\ast$, where $i\colon \Ba\to X^\ast$ is the said linear isometry and $\kappa\colon X\to X^{\ast\ast}$ is the canonical embedding (see \cite[\S 2]{daws2}).

It should be remarked that the original assumption made by Johnson was that the codomain algebra $\Ba$ is isomorphic, as a~Banach $\Ba$-bimodule, to the dual $(\Ba_\ast)^\ast$ of some Banach $\Ba$-bimodule $\Ba_\ast$. Our choice follows the definition proposed in \cite{CHL} which was influenced by \cite{daws1} and \cite{daws2}.

\section{Proof of the main result}
\noindent
From now on, we fix Banach algebras $\Aa$ and $\Ba$ satisfying the assumptions of Theorem~\ref{main_T}. Let $(\Delta_\alpha)_{\alpha\in A}\subset \Aa\hat{\otimes}\Aa$ be a~bounded approximate diagonal for $\Aa$ with
$$
\Delta_\alpha=\sum_j a_{\alpha,j}\otimes b_{\alpha,j}\qquad (\alpha\in A).
$$
All these are finite sums yet we do not need to indicate the sets over which we sum. The following observation is due to Johnson (see also \cite[Lemma~5.11]{CHL}); we include a~short proof for the sake of completeness.
\begin{lemma}\label{correct_L}
For any $R\in\LL^2(\Aa,\Ba)$, the limit
\begin{equation*}
\lim_\alpha \sum_j R(a_{\alpha,j},b_{\alpha,j})
\end{equation*}
exists in the weak$^\ast$ topology $\sigma(\Ba,\Ba_\ast)$ on $\Ba$.
\end{lemma}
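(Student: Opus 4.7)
The plan is to leverage the fact that a bounded approximate diagonal $(\Delta_\alpha)$ may be assumed to converge to a virtual diagonal $\mathsf{M}\in(\Aa\hat{\otimes}\Aa)^{\ast\ast}$ in the weak$^\ast$ topology $\sigma((\Aa\hat{\otimes}\Aa)^{\ast\ast},(\Aa\hat{\otimes}\Aa)^\ast)$, and then transport this convergence through the linearization $\w{R}\colon\Aa\hat{\otimes}\Aa\to\Ba$ by testing against predual functionals on $\Ba$.

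Concretely, first observe that $\sum_j R(a_{\alpha,j},b_{\alpha,j})=\w{R}(\Delta_\alpha)$ by the very definition of $\w{R}$. The goal is therefore to show that $\w{R}(\Delta_\alpha)$ converges in $\sigma(\Ba,\Ba_\ast)$. For each $\xi\in\Ba_\ast$, let $i\colon\Ba\to (\Ba_\ast)^\ast$ be the defining isometric identification and consider the composition
$$
\phi_\xi\colon\Aa\hat{\otimes}\Aa\yrightarrow[2=-2pt]{\w{R}}\Ba\yrightarrow[2=-2pt]{i}(\Ba_\ast)^\ast\yrightarrow[2=-2pt]{\ip{\cdot}{\xi}}\C,
$$
which is a~bounded linear functional on $\Aa\hat{\otimes}\Aa$ with $\n{\phi_\xi}\leq\n{\w{R}}\cdot\n{\xi}$.

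Since $\Delta_\alpha\to\mathsf{M}$ weak$^\ast$ in $(\Aa\hat{\otimes}\Aa)^{\ast\ast}$, we obtain
$$
\phi_\xi(\Delta_\alpha)\longrightarrow\mathsf{M}(\phi_\xi)\qquad(\xi\in\Ba_\ast).
$$
Define $\Lambda\colon\Ba_\ast\to\C$ by $\Lambda(\xi):=\mathsf{M}(\phi_\xi)$. Linearity of $\Lambda$ is immediate from linearity of $\xi\mapsto\phi_\xi$, and the estimate $|\Lambda(\xi)|\leq\n{\mathsf{M}}\cdot\n{\w{R}}\cdot\n{\xi}$ shows $\Lambda\in(\Ba_\ast)^\ast$. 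Under the identification $i$, this element corresponds to some $b\in\Ba$, and by construction $\w{R}(\Delta_\alpha)\to b$ in the $\sigma(\Ba,\Ba_\ast)$ topology.

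There is no real obstacle here: the entire argument is a~routine consequence of the weak$^\ast$ convergence $\Delta_\alpha\to\mathsf{M}$ together with the isometric predual hypothesis on $\Ba$, which ensures that the functional $\Lambda$ produced above genuinely lies in $\Ba=(\Ba_\ast)^\ast$ rather than merely in $\Ba^{\ast\ast}$. The only point worth care is the bookkeeping between the two weak$^\ast$ topologies in play: the one on $(\Aa\hat{\otimes}\Aa)^{\ast\ast}$ (used to define virtual diagonals) and the one on $\Ba$ (given by the predual $\Ba_\ast$); the construction of $\phi_\xi$ is precisely the bridge between them.
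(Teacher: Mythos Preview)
Your proof is correct and follows essentially the same approach as the paper. The paper packages the argument more compactly via the second adjoint $\w{R}^{\ast\ast}$ and the projection $\kappa^\ast\colon\Ba^{\ast\ast}\to(\Ba_\ast)^\ast=\Ba$, writing the limit as $\kappa^\ast\w{R}^{\ast\ast}(\mathsf{M})$; your functional $\Lambda$ is exactly this element, and your $\phi_\xi$ is $\w{R}^\ast(\kappa(\xi))$, so you have simply unpacked the adjoint machinery by hand.
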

\begin{proof}
Let $\kappa\colon\Ba_\ast\to\Ba^{\ast}$ be the canonical embedding, so that $\kappa^\ast\colon\Ba^{\ast\ast}\to\Ba$ is an~isometric projection. Let also $\mathsf{M}\in (\Aa\hat{\otimes}\Aa)^{\ast\ast}$ be the virtual diagonal for $\Aa$ such that $\lim_\alpha\Delta_\alpha=\mathsf{M}$ in the weak$^\ast$ topology. Consider the corresponding linear operator $\w{R}\in\LL^1(\Aa\hat{\otimes}\Aa,\Ba)$ and its second adjoint $\w{R}^{\ast\ast}\in\LL^1((\Aa\hat{\otimes}\Aa)^{\ast\ast},\Ba^{\ast\ast})$. We have
$$
\lim_\alpha\sum_j R(a_{\alpha,j},b_{\alpha,j})=\lim_\alpha\w{R}(\Delta_\alpha)=\lim_\alpha \kappa^\ast \w{R}^{\ast\ast}(\Delta_\alpha)=\kappa^\ast\w{R}^{\ast\ast}(\mathsf{M}).
$$
\end{proof}

Similarly as in Johnson's method (see the proof of \cite[Thm.~3.1]{johnson}), we define an~operator $\mathsf{J}\colon \LL^2(\Aa,\Ba)\to\LL^1(\Aa,\Ba)$ by
\begin{equation}\label{J}
\mathsf{J}R(x)=\lim_\alpha \sum_j \Psi(T(a_{\alpha,j}),R(b_{\alpha,j},x)).
\end{equation}
This definition is correct in view of Lemma~\ref{correct_L}. Note also that for any weak$^\ast$ convergent net $(y_i)_{i\in I}\subset\Ba$ we have $\n{\!\lim_i y_i}\leq \limsup_i\n{y_i}$. We will use this observation several times in the sequel.

We shall show that formula \eqref{J} yields an~approximate right inverse of the derivative $\mathsf{D}$ given in Lemma~\ref{derivative_L} on the set of operators `almost' belonging to $\mathrm{ker}\,\delta_T^2$. In order to simplify writing let us denote $u\circ v=\Psi(u,v)$.
\begin{proposition}\label{inverse_P}
Assume that $T$ is unital and $\Psi(1,u)=u$ for each $u\in\Ba$. For every $R\in\LL^2(\Aa,\Ba)$, we have
\begin{equation}\label{RDJR}
\begin{split}
\n{R+\mathsf{D}\mathsf{J}R}\leq \big( &2\!\cdot\!\mdef_\Psi(T)\cdot\!\n{\Psi}\!\cdot\!\n{R}\\
&+\n{\psi}\!\cdot\!\n{T}\!\cdot\!\n{\delta_T^2 R}+\adef(\Psi)\!\cdot\!\n{T}^2\!\cdot\!\n{R}\big)\!\cdot\!\sup_\alpha\n{\Delta_\alpha}.
\end{split}
\end{equation}
\end{proposition}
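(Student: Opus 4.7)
Fix $x,y\in\Aa$ with $\n{x},\n{y}\leq 1$. By Lemma~\ref{derivative_L},
\begin{equation*}
[R+\mathsf{D}\mathsf{J}R](x,y)=R(x,y)+\mathsf{J}R(xy)-T(x)\circ\mathsf{J}R(y)-\mathsf{J}R(x)\circ T(y).
\end{equation*}
The plan is to expand $\mathsf{J}R(xy)$ via the defining formula~\eqref{J} and then to rearrange the summands so that, up to controlled defect errors, the other three quantities on the right are reconstructed from them. The weak*-limit will be taken only at the very end, so that all intermediate identities are equalities in $\Ba$ at the level of finite sums; the final inequality will follow from $\n{\wot_\alpha z_\alpha}\leq\limsup_\alpha\n{z_\alpha}$.

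The pivotal step is to rewrite $R(b_{\alpha,j},xy)$ using the very definition of $\delta_T^2 R$:
\begin{equation*}
R(b_{\alpha,j},xy)=R(b_{\alpha,j}x,y)-T(b_{\alpha,j})\circ R(x,y)+R(b_{\alpha,j},x)\circ T(y)+\delta_T^2 R(b_{\alpha,j},x,y).
\end{equation*}
Substituting this into $\mathsf{J}R(xy)=\wot_\alpha\sum_j T(a_{\alpha,j})\circ R(b_{\alpha,j},xy)$ splits it into four weak*-limits; the one involving $\delta_T^2 R$ yields directly the $\n{\Psi}\cdot\n{T}\cdot\n{\delta_T^2 R}$ contribution, since $\n{\sum_j T(a_{\alpha,j})\circ\delta_T^2 R(b_{\alpha,j},x,y)}\leq\n{\Psi}\n{T}\n{\delta_T^2 R}\sum_j\n{a_{\alpha,j}}\n{b_{\alpha,j}}$. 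The first remaining sum, $\sum_j T(a_{\alpha,j})\circ R(b_{\alpha,j}x,y)$, I would process by first shifting $x$ across the tensor factors via $\n{x\cdot\Delta_\alpha-\Delta_\alpha\cdot x}\to 0$ in norm, then substituting $T(xa_{\alpha,j})=T(x)\circ T(a_{\alpha,j})+T^{\,\vee}(x,a_{\alpha,j})$ (cost: $\mdef_\Psi(T)$) and re-bracketing (cost: $\adef(\Psi)$) to reach $T(x)\circ\sum_j T(a_{\alpha,j})\circ R(b_{\alpha,j},y)$, whose weak*-limit is $T(x)\circ\mathsf{J}R(y)$. The second, $\sum_j T(a_{\alpha,j})\circ(T(b_{\alpha,j})\circ R(x,y))$, I would re-bracket to $(T(a_{\alpha,j})\circ T(b_{\alpha,j}))\circ R(x,y)$ and then replace the inner pair by $T(a_{\alpha,j}b_{\alpha,j})-T^{\,\vee}(a_{\alpha,j},b_{\alpha,j})$ (second use of $\mdef_\Psi(T)$); summation gives $T(\pi_\Aa(\Delta_\alpha))\circ R(x,y)\to 1\circ R(x,y)=R(x,y)$ in norm, by unitality of $T$, the norm convergence $\pi_\Aa(\Delta_\alpha)\to 1$, and $\Psi(1,\cdot)=\id$. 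The third, $\sum_j T(a_{\alpha,j})\circ(R(b_{\alpha,j},x)\circ T(y))$, needs just one re-bracketing to become $(\sum_j T(a_{\alpha,j})\circ R(b_{\alpha,j},x))\circ T(y)$, whose weak*-limit is $\mathsf{J}R(x)\circ T(y)$. After these substitutions the four main terms cancel and only the defect contributions remain; each is a finite sum proportional to $\sum_j\n{a_{\alpha,j}}\n{b_{\alpha,j}}$, which by suitable choice of representation can be made arbitrarily close to $\n{\Delta_\alpha}$, and passing to the weak*-limit produces the inequality~\eqref{RDJR} with the factor $\sup_\alpha\n{\Delta_\alpha}$.

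The main obstacle is combinatorial bookkeeping: three distinct types of error term appear (from $\mdef_\Psi(T)$, $\adef(\Psi)$, and $\n{\delta_T^2 R}$), and the re-bracketings and substitutions must be ordered carefully so that the coefficients match those announced in~\eqref{RDJR}. A mild but genuine technical subtlety is that $\Psi(T(x),\cdot)$ and $\Psi(\cdot,T(y))$ are not a priori weak*-continuous on $\Ba$, so the identifications of $T(x)\circ\mathsf{J}R(y)$ and $\mathsf{J}R(x)\circ T(y)$ as weak*-limits of finite sums must be justified by keeping every manipulation at the finite-sum level in $\Ba$ and invoking the weak*-limit (together with the norm's lower semicontinuity under weak*-limits) only at the very end of the argument.
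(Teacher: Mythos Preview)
Your approach is essentially the same as the paper's: the paper also inserts $R(x,y)=\lim_\alpha T(\pi_\Aa(\Delta_\alpha))\circ R(x,y)$, expands $R(b_{\alpha,j},xy)$ via the definition of $\delta_T^2 R$, shifts $x$ across the tensor factors using $x\cdot\Delta_\alpha-\Delta_\alpha\cdot x\to 0$, and then collects exactly the same three types of error (two $\mdef_\Psi(T)$-contributions, three $\adef(\Psi)$-contributions, one $\n{\delta_T^2 R}$-contribution). The only cosmetic difference is that the paper packages each error term as $\w\Phi_i(\Delta_\alpha)$ for a suitable bilinear $\Phi_i$, which gives the bound $\n{\Phi_i}\cdot\n{\Delta_\alpha}$ directly and spares you the passage ``choose a representation with $\sum_j\n{a_{\alpha,j}}\n{b_{\alpha,j}}$ close to $\n{\Delta_\alpha}$''.
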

\begin{proof}
First, observe that since $\lim_\alpha\pi_\Aa(\Delta_\alpha)=1$, we have $\lim_\alpha\sum_j a_{\alpha,j}b_{\alpha,j}=1$ in norm and hence $\lim_\alpha \sum_j T(a_{\alpha,j}b_{\alpha,j})=1$. For any $x,y\in\Aa$, using the identity $\Psi(1,u)=u$ and Lemma~\ref{derivative_L}, we thus obtain

\vspace*{2mm}
\begin{equation}\label{R+DJR1}
\begin{split}
[R+\mathsf{D}\mathsf{J}R](x,y) &=R(x,y)+[\mathsf{J}R](xy)-T(x)\circ [\mathsf{J}R](y)-[\mathsf{J}R](x)\circ T(y)\\[4pt]
&=\lim_\alpha \sum_j\Big\{T(a_{\alpha,j}b_{\alpha,j})\circ R(x,y)+T(a_{\alpha,j})\circ R(b_{\alpha,j},xy)\\[-8pt]
& \hspace*{124pt}-T(x)\circ\big(T(a_{\alpha,j})\circ R(b_{\alpha,j},y)\big)\\
& \hspace*{124pt}-\big(T(a_{\alpha,j})\circ R(b_{\alpha,j},x)\big)\circ T(y)\Big\}.
\end{split}
\end{equation}
Since 
$$
R(b_{\alpha,j},xy)=\delta_T^2\,R(b_{\alpha,j},x,y)-T(b_{\alpha,j})\circ R(x,y)+R(b_{\alpha,j}x,y)+R(b_{\alpha,j},x)\circ T(y),
$$
we have 
\begin{equation*}
\begin{split}
\Big\|\lim_\alpha\sum_j\Big\{T( &a_{\alpha,j})\circ 
 R(b_{\alpha,j},xy)-T(a_{\alpha,j})\circ R(b_{\alpha,j}x,y)\\[-8pt]
&+T(a_{\alpha,j})\circ \big(T(b_{\alpha,j})\circ R(x,y)\big)-T(a_{\alpha,j})\circ\big(R(b_{\alpha,j},x)\circ T(y)\big)\Big\}\Big\|\\[6pt]
&\hspace*{140pt}\leq \n{\Psi}\!\cdot\!\n{T}\!\cdot\!\n{\delta_T^2 R}\!\cdot\!\n{x}\n{y}\sup_\alpha \n{\Delta_\alpha}.
\end{split}
\end{equation*}
This estimate follows by considering the bilinear operator $\Phi_1\colon\Aa\times\Aa\to\Ba$ given by
$$
\Phi_1(a,b)=T(a)\circ \delta_T^2 R(b,x,y)
$$
for which $\n{\Phi_1}\leq\n{\Psi}\!\cdot\!\n{T}\!\cdot\!\n{\delta_T^2 R}\!\cdot\!\n{x}\n{y}$, and for the corresponding $\w\Phi_1\colon\Aa\hat{\otimes}\Aa\to\Ba$ we have 
$$
\w\Phi_1(\Delta_\alpha)=\sum_j T(a_{\alpha,j})\circ \delta_T^2 R(b_{\alpha,j},x,y),
$$
thus 
$$
\Big\|\lim_\alpha \sum_j T(a_{\alpha,j})\circ \delta_T^2 R(b_{\alpha,j},x,y)\Big\|\leq \n{\Psi}\!\cdot\!\n{T}\!\cdot\!\n{\delta_T^2 R}\!\cdot\!\n{x}\n{y}\sup_\alpha\n{\Delta_\alpha}=:\!\eta_1(x,y).
$$
Coming back to \eqref{R+DJR1} we see that $[R+\mathsf{D}\mathsf{J}R](x,y)$ is at distance at most $\eta_1(x,y)$ from 
\begin{equation*}
\begin{split}
S(x,y)\coloneqq \lim_\alpha\sum_j \Big\{T( &a_{\alpha,j}b_{\alpha,j})\circ R(x,y)-T(a_{\alpha,j})\circ \big(T(b_{\alpha,j})\circ R(x,y)\big)\\[-8pt]
&+T(a_{\alpha,j})\circ\big(R(b_{\alpha,j},x)\circ T(y)\big)-\big(T(a_{\alpha,j})\circ R(b_{\alpha,j},x)\big)\circ T(y)\\[2pt]
&\hspace*{41pt}+T(a_{\alpha,j})\circ R(b_{\alpha,j}x,y)-T(x)\circ\big(T(a_{\alpha,j})\circ R(b_{\alpha,j},y)\big)\Big\}.
\end{split}
\end{equation*}
Considering the bilinear operator $\Phi_2\colon\Aa\times\Aa\to\Ba$ given by 
$$
\Phi_2(a,b)=T(a)\circ (T(b)\circ R(x,y))-(T(a)\circ T(b))\circ R(x,y),
$$
we have $\n{\Phi_2}\leq \adef(\Psi)\!\cdot\!\n{T}^2\!\cdot\!\n{R}\!\cdot\!\n{x}\n{y}$ and 
$$
\w\Phi_2(\Delta_\alpha)=\sum_j \Big\{T(a_{\alpha,j})\circ \big(T(b_{\alpha,j})\circ R(x,y)\big)-\big(T(a_{\alpha,j})\circ T(b_{\alpha,j})\big)\circ R(x,y)\Big\}.
$$
Hence, the difference in the first line of the definition of $S(x,y)$ can be estimated as follows:
\begin{equation}\label{est1}
\begin{split}
\Big\|\lim_\alpha\sum_j \Big\{ &T(a_{\alpha,j}b_{\alpha,j})\circ R(x,y)-T(a_{\alpha,j})\circ \big(T(b_{\alpha,j})\circ R(x,y)\big)\Big\}\Big\|\\
&\leq \Big\|\lim_\alpha\sum_j \Big\{\big(T(a_{\alpha,j}b_{\alpha,j})-T(a_{\alpha,j})\circ T(b_{\alpha,j})\big)\circ R(x,y)\Big\}\Big\|\\
&\hspace*{88pt}+\adef(\Psi)\!\cdot\!\n{T}^2\!\cdot\!\n{R}\!\cdot\!\n{x}\n{y}\sup_\alpha\n{\Delta_\alpha}=:\eta_2(x,y).
\end{split}
\end{equation}
Similarly, for the difference in the second line, we have
\begin{equation}\label{est2}
\Big\|\lim_\alpha\sum_j \Big\{T(a_{\alpha,j})\circ\big(R(b_{\alpha,j},x)\circ T(y)\big)-\big(T(a_{\alpha,j})\circ R(b_{\alpha,j},x)\big)\circ T(y)\Big\}\Big\|\leq\eta_2(x,y).
\end{equation}
Finally, in order to estimate the difference in the third line of the definition of $S(x,y)$, consider the bilinear map $\Phi_3\colon\Aa\times\Aa\to\Ba$ defined by $\Phi_3(a,b)=T(a)\circ R(b,y)$. By the diagonal property, $\lim_\alpha(x\cdot\Delta_\alpha-\Delta_\alpha\cdot x)=0$, we have 
$$
\lim_\alpha\w\Phi_3\Big(\sum_j a_{\alpha,j}\otimes b_{\alpha,j}x\Big)=\lim_\alpha\w\Phi_3\Big(\sum_j xa_{\alpha,j}\otimes b_{\alpha,j}\Big),
$$
which means that the norm of the difference in the third line defining $S(x,y)$ equals
\begin{equation}\label{est3}
\begin{split}
\Big\|\lim_\alpha\sum_j &\Big\{T(xa_{\alpha,j})\circ R(b_{\alpha,j},y)-T(x)\circ\big(T(a_{\alpha,j})\circ R(b_{\alpha,j},y)\big)\Big\}\Big\|\\
&\leq \Big\|\lim_\alpha\sum_j \Big\{T(xa_{\alpha,j})\circ R(b_{\alpha,j},y)-\big(T(x)\circ T(a_{\alpha,j})\big)\circ R(b_{\alpha,j},y)\Big\}\Big\|\\[-3pt]
&\hspace*{288pt}+\eta_2(x,y).
\end{split}
\end{equation}
Combining \eqref{est1}, \eqref{est2} and \eqref{est3} we obtain
\begin{equation*}
\begin{split}
\n{[R+\mathsf{D}\mathsf{J}R](x,y)} &\leq \Big\|\lim_\alpha\sum_j \Big\{\big(T(a_{\alpha,j}b_{\alpha,j})-T(a_{\alpha,j})\circ T(b_{\alpha,j})\big)\circ R(x,y)\Big\}\Big\|\\
&+\Big\|\lim_\alpha\sum_j \Big\{T(xa_{\alpha,j})\circ R(b_{\alpha,j},y)-\big(T(x)\circ T(a_{\alpha,j})\big)\circ R(b_{\alpha,j},y)\Big\}\Big\|\\[-3pt]
&\hspace*{226pt}+\eta_1(x,y)+3\eta_2(x,y)\\[5pt]
&\leq 2\!\cdot\!\mdef_\Psi(T)\cdot\!\n{\Psi}\!\cdot\!\n{R}\!\cdot\!\n{x}\n{y}\sup_\alpha\n{\Delta_\alpha}+\eta_1(x,y)+3\eta_2(x,y),
\end{split}
\end{equation*}
 which gives the desired estimate \eqref{RDJR}.
\end{proof}

\begin{corollary}\label{est_C}
Assume that $T$ is unital and $\Psi(1,u)=u$ for each $u\in\Ba$. Let also $(\Delta_\alpha)_{\alpha\in A}$ be a~bounded approximate diagonal in $\Aa\otimes\Aa$ with $M=\sup_\alpha\n{\Delta_\alpha}<\infty$. Then, the~operator $\mathsf{J}\colon\LL^2(\Aa,\Ba)\to\LL^1(\Aa,\Ba)$ defined by \eqref{J} satisfies 
\begin{equation}\label{cor_estimate}
\begin{split}
\n{T^{\,\vee}+\mathsf{D}\mathsf{J}\,T^{\,\vee}}\leq \big( &2\!\cdot\!(\mdef_\Psi(T))^2\n{\Psi}\\
&+\adef(\Psi)\!\cdot\!\mdef_\Psi(T)\!\cdot\!\n{T}^2+\adef(\Psi)\!\cdot\!\n{\psi}\!\cdot\!\n{T}^4\big) M.
\end{split}
\end{equation}
\end{corollary}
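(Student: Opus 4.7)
The plan is to obtain Corollary \ref{est_C} as a direct specialization of Proposition \ref{inverse_P} with the choice $R = T^{\,\vee}$. This is natural because $T^{\,\vee}$ is precisely the object we want to correct for, and Proposition \ref{inverse_P} is formulated exactly to estimate how close $\mathsf{J}$ comes to being a right inverse of the derivative $\mathsf{D}$ on operators that are `almost' in $\ker\,\delta_T^2$. Since Lemma \ref{almost_ker_L} shows that $T^{\,\vee}$ has exactly this property (its $\delta_T^2$-image is controlled by $\adef(\Psi)$), both inputs are already available.

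Concretely, I would substitute $R = T^{\,\vee}$ into the estimate
$$
\n{R+\mathsf{D}\mathsf{J}R}\leq \big(2\!\cdot\!\mdef_\Psi(T)\!\cdot\!\n{\Psi}\!\cdot\!\n{R}+\n{\Psi}\!\cdot\!\n{T}\!\cdot\!\n{\delta_T^2 R}+\adef(\Psi)\!\cdot\!\n{T}^2\!\cdot\!\n{R}\big)\cdot M,
$$
and then use two elementary bounds: first, the tautological estimate $\n{T^{\,\vee}}\leq\mdef_\Psi(T)$, which comes straight from the definition of the multiplicative defect; second, $\n{\delta_T^2\,T^{\,\vee}}\leq\adef(\Psi)\cdot\n{T}^3$, which is exactly Lemma \ref{almost_ker_L}. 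Plugging these into the three summands yields, respectively, $2(\mdef_\Psi(T))^2\n{\Psi}$ for the first, $\adef(\Psi)\cdot\n{\Psi}\cdot\n{T}^4$ for the second, and $\adef(\Psi)\cdot\mdef_\Psi(T)\cdot\n{T}^2$ for the third, which together give exactly \eqref{cor_estimate}.

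Since the assumptions on $T$ and $\Psi$ (unitality of $T$ and the identity $\Psi(1,u)=u$) are the same as in Proposition \ref{inverse_P}, there is nothing further to verify. There is no real obstacle: this corollary is purely a bookkeeping step packaging Proposition \ref{inverse_P} and Lemma \ref{almost_ker_L} into the form that will be iterated in the Newton--Raphson-type scheme that follows.
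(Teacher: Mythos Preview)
Your proposal is correct and follows exactly the paper's own proof: apply Proposition~\ref{inverse_P} with $R=T^{\,\vee}$ and invoke Lemma~\ref{almost_ker_L} for the bound on $\n{\delta_T^2\,T^{\,\vee}}$. The substitution you carry out term by term is precisely what the paper intends, and there is nothing to add.
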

\begin{proof}
We simply apply Proposition~\ref{inverse_P} to $R=T^{\,\vee}$ and appeal to Lemma~\ref{almost_ker_L}.
\end{proof}

Following Johnson's idea, for any fixed bounded approximate diagonal $(\Delta_\alpha)_{\alpha\in A}\subset \Aa\otimes\Aa$, we define the `improving operator' $\FF\colon\LL^1(\Aa,\Ba)\to\LL^1(\Aa,\Ba)$ by
\begin{equation}\label{FT}
\FF\,T=T+\mathsf{J}T^{\,\vee},
\end{equation}
where $\mathsf{J}$ is given by formula \eqref{J}.

\begin{proposition}\label{defect_new_P}
Under the same assumptions as in {\rm Corollary~\ref{est_C}}, there exist constants $C_i=C_i(\n{\Psi},\n{T},M)\geq 0$, for $i=1,2,3$, such that
\begin{equation*}
\begin{split}
\mdef_\Psi(\FF T)\leq C_1( &\mdef_\Psi(T))^2\\
&+C_2\!\cdot\!\adef(\Psi)\!\cdot\!\mdef_\Psi(T)+C_3\!\cdot\!\adef(\Psi).
\end{split}
\end{equation*}
\end{proposition}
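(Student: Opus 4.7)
The plan is to compute $(\FF T)^{\vee}$ directly using the Fréchet derivative formula from Lemma~\ref{derivative_L}, then split the resulting bound into two pieces: the ``linear'' contribution handled by Corollary~\ref{est_C} and a quadratic remainder estimated crudely.

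First I would apply Lemma~\ref{derivative_L} with $H=\mathsf{J}T^{\,\vee}$. Since $\FF T = T + \mathsf{J}T^{\,\vee}$, the expansion in that lemma together with the remainder bound \eqref{R_norm} gives
$$
(\FF T)^{\vee} = T^{\,\vee} + \mathsf{D}\mathsf{J}T^{\,\vee} - r(\mathsf{J}T^{\,\vee}),
$$
where $\n{r(\mathsf{J}T^{\,\vee})} \leq \n{\Psi}\cdot\n{\mathsf{J}T^{\,\vee}}^2$. Therefore
$$
\mdef_\Psi(\FF T) = \n{(\FF T)^{\vee}} \leq \n{T^{\,\vee}+\mathsf{D}\mathsf{J}T^{\,\vee}} + \n{\Psi}\cdot\n{\mathsf{J}T^{\,\vee}}^2.
$$
The first term is exactly the left-hand side of the estimate \eqref{cor_estimate} in Corollary~\ref{est_C}, so it is already bounded by an expression linear in $\adef(\Psi)$ plus a term quadratic in $\mdef_\Psi(T)$.

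Next I would bound $\n{\mathsf{J}T^{\,\vee}}$ using the defining formula \eqref{J} together with the observation that $\n{\lim_i y_i}\leq \limsup_i\n{y_i}$ in the weak$^\ast$ topology. Since each partial sum $\sum_j \Psi(T(a_{\alpha,j}),T^{\,\vee}(b_{\alpha,j},x))$ has norm at most $\n{\Psi}\cdot\n{T}\cdot\n{T^{\,\vee}}\cdot\n{x}\cdot\n{\Delta_\alpha}$, we obtain
$$
\n{\mathsf{J}T^{\,\vee}} \leq \n{\Psi}\cdot\n{T}\cdot\mdef_\Psi(T)\cdot M.
$$
Squaring yields a purely $(\mdef_\Psi(T))^2$ contribution to $\n{\Psi}\cdot\n{\mathsf{J}T^{\,\vee}}^2$, which can be absorbed into the $C_1$ term.

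Combining the two pieces, one reads off the constants
$$
C_1 = 2\n{\Psi}M + \n{\Psi}^3\n{T}^2 M^2,\qquad C_2 = \n{T}^2 M,\qquad C_3 = \n{\Psi}\cdot\n{T}^4\cdot M,
$$
which depend only on $\n{\Psi}, \n{T}, M$ as required. There is no real obstacle here: the only subtle point is that the Fréchet remainder from Lemma~\ref{derivative_L} happens to be quadratic in $\mathsf{J}T^{\,\vee}$, and since $\mathsf{J}T^{\,\vee}$ is itself controlled linearly by $\mdef_\Psi(T)$, it contributes precisely to the $(\mdef_\Psi(T))^2$ coefficient rather than polluting the $\adef(\Psi)$-terms, which is exactly what the iteration scheme needs in order to produce quadratic convergence at each step (as long as $\adef(\Psi)$ is non-negligible).
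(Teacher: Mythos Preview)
Your proof is correct and essentially identical to the paper's: you use the same triangle-inequality split $\n{(\FF T)^{\vee}}\leq \n{T^{\,\vee}+\mathsf{D}\mathsf{J}T^{\,\vee}}+\n{\Psi}\!\cdot\!\n{\mathsf{J}T^{\,\vee}}^2$, invoke Corollary~\ref{est_C} for the first term, bound $\n{\mathsf{J}T^{\,\vee}}\leq \n{\Psi}\!\cdot\!\n{T}\!\cdot\!\mdef_\Psi(T)\!\cdot\! M$ exactly as in the paper's inequality~\eqref{JT_norm}, and arrive at the very same constants $C_1,C_2,C_3$ given in~\eqref{C1}.
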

\begin{proof}
First, observe that considering the bilinear map $\Phi\colon\Aa\times\Aa\to\Ba$ given by $\Phi(a,b)=T(a)\circ T^{\,\vee}(b,x)$, for any fixed $x\in\Aa$, we have 
$$
\n{\Phi}\leq\mdef_\Psi(T)\!\cdot\! \n{\Psi}\!\cdot\!\n{T}\n{x},
$$
hence
\begin{equation}\label{JT_norm}
\n{\mathsf{J}T^{\,\vee}(x)}=\lim_\alpha\n{\w\Phi(\Delta_\alpha)}\leq \mdef_\Psi(T)\!\cdot\! \n{\Psi}\!\cdot\!\n{T}\!\cdot\! M\n{x}.
\end{equation}
Using \eqref{JT_norm} in combination with Lemma~\ref{derivative_L}, in particular estimate \eqref{R_norm}, as well as Corollary~\ref{est_C}, we get
\begin{equation*}
\begin{split}
\mdef_\Psi(\FF T) &=\n{(T+\mathsf{J}T^{\,\vee})^{\,\vee}}\\
&\leq \n{(T+\mathsf{J}T^{\,\vee})^{\,\vee}-T^{\,\vee}-\mathsf{D}\mathsf{J}T^{\,\vee}}+\n{T^{\,\vee}+\mathsf{D}\mathsf{J}T^{\,\vee}}\\
&\leq \n{\Psi}\!\cdot\!\n{\mathsf{J}T^{\,\vee}}^2+\n{T^{\,\vee}+\mathsf{D}\mathsf{J}T^{\,\vee}}\\
&\leq C_1 (\mdef_\Psi(T))^2+C_2\!\cdot\!\adef(\Psi)\!\cdot\!\mdef_\Psi(T)+C_3\!\cdot\!\adef(\Psi),
\end{split}
\end{equation*}
where
\begin{equation}\label{C1}
\left\{\begin{array}{rcl}
C_1(\n{\Psi},\n{T},M) & = & \n{\Psi}^3\!\cdot\!\n{T}^2\!\cdot\! M^2+2\n{\Psi}\!\cdot\! M\\
C_2(\n{\Psi},\n{T},M) & = & \n{T}^2\!\cdot\! M\\
C_3(\n{\Psi},\n{T},M) & = & \n{\Psi}\!\cdot\!\n{T}^4\!\cdot\! M.
\end{array}\right.
\end{equation}
\end{proof}

The proof of our main result relies on an~approximation procedure in which a~given operator $T_n$ is replaced by its `improved' version $T_n+\FF T_n$. We iterate this process as long as the obtained multiplicative defects are roughly larger than the associative defect of $\Psi$. This terminates at some point, unless we can continue and obtain an exact solution of equation \eqref{E}.

\begin{proof}[Proof of Theorem~\ref{main_T}]
Fix any $K,L\geq 1$, $\e, \theta\in (0,1)$, pick a~bounded approximate diagonal $(\Delta_\alpha)_{\alpha\in A}$ in $\Aa\otimes\Aa$ with $M=\sup_\alpha\n{\Delta_\alpha}<\infty$. Define $\delta\in (0,1)$ by the formula
\begin{equation}\label{delta_def}
\delta=\Big[2\big(2LM+e^{4LM}(L^3M^2+M)K^2+e^{8LM}LMK^4\big)\Big]^{-1/\theta}\mathbf{\e}.
\end{equation}

Fix any unital operator $T\in\LL^1(\Aa,\Ba)$ with $\n{T}\leq K$ and $\mdef_\Psi(T)\leq\delta$. Assume also that $\n{\Psi}\leq L$ and define $\alpha=\adef(\Psi)$. Let $\FF\colon\LL^1(\Aa,\Ba)\to\LL^1(\Aa,\Ba)$ be the `improving operator' defined by \eqref{FT}. We define a sequence $(T_n)_{n=0}^\infty\subset\LL^1(\Aa,\Ba)$ recursively by 
$$
T_0=T\quad\,\,\mbox{and}\,\,\quad T_n=\FF T_{n-1}\qquad\mbox{for }n\in\N.
$$
Suppose that $\alpha\leq (\mdef_\Psi(T))^{1+\theta}$. We define $N\in\N\cup\{\infty\}$ as the least natural number $n$ for which 
$$
\alpha>(\mdef_\Psi(T_n))^{1+\theta},
$$
provided that such an $n$ exists, and we set $N=\infty$ otherwise. Define $(\omega_n)_{n=0}^\infty$ recursively by $\omega_0=0$ and $\omega_n=1+(1+\theta)\omega_{n-1}$ for $n\in\N$. For any $n\in\N_0$, we also define
$$
\delta_n=2^{-\omega_n}\delta\quad\mbox{ and }\quad \beta_n=\prod_{j=0}^{n-1}(1+LM\delta_j).
$$
Observe that since $\sum_{j=0}^\infty\delta_j<\infty$, the product $\prod_{j=0}^\infty (1+LM\delta_j)$ converges, hence we have $\beta_n\nearrow B$ for some $B<\infty$. In fact, since $\omega_n\geq n$ for $n\in\N_0$, we have 
\begin{equation}\label{B_est}
B\leq\prod_{j=0}^\infty(1+2^{-j}LM\delta)\leq \exp\Big\{\sum_{j=0}^\infty 2^{-j}LM\Big\}=\exp(2LM).
\end{equation}
Set $K_n=\beta_n K$, so that we have $K_n<BK$ for each $n\in\N_0$.

\vspace*{2mm}\noindent
\underline{{\it Claim.}} For each integer $0\leq n<N$, the following conditions hold true:

\vspace*{1mm}
\begin{enumerate}[label={\rm (\roman*)}, leftmargin=28pt]
\setlength{\itemindent}{0pt}
\setlength{\itemsep}{3pt}

\item $T_n$ is unital;
\item $\mdef_\Psi(T_n)\leq\delta_n$;
\item $\n{T_n}\leq K_n$.
\end{enumerate}

\vspace*{2mm}\noindent
We proceed by induction. In order to show (i) notice that the identity $\Psi(u,1)=u$ implies $T^{\,\vee}(b,1)=0$ for every $b\in\Ba$. Hence, $\mathsf{J} T^{\,\vee}(1)=0$ which gives $T_1(1)=\FF T(1)=T(1)=1$ and, by induction, $T_n(1)=1$ for every $n\in\N$.

Assertions (ii) and (iii) are plainly true for $n=0$ as $\delta_0=\delta$ is not smaller than the defect of $T=T_0$, and $K_0=\beta_0K=K$. Fix $n\in\N_0$ with $n+1<N$ and suppose that estimates (ii) and (iii) hold true. Using Proposition~\ref{defect_new_P}, formulas \eqref{C1}, and the fact that $\alpha\leq (\mdef_\Psi(T_n))^{1+\theta}$, we obtain
\begin{equation*}
\begin{split}
\mdef_\Psi(T_{n+1}) &=\mdef_\Psi(\FF T_n)\\
&\leq C_1(\n{\Psi},\n{T_n},M)\delta_n^2+C_2(\n{\Psi},\n{T_n},M)\alpha\delta_n\\
&\hspace*{127pt}+C_3(\n{\Psi},\n{T_n},M)\alpha\\[1pt]
&\leq \big(C_1(\n{\Psi},\n{T_n},M)+C_2(\n{\Psi},\n{T_n},M)\\
&\hspace*{127pt}+C_3(\n{\Psi},\n{T_n},M)\big)\delta_n^{1+\theta}\\[1pt]
&\leq \big(2LM+K_n^2L^3M^2+K_n^2M+K_n^4LM\big)\delta_n^{1+\theta}\\[1pt]
&\leq\big(2LM+(L^3M^2+M)B^2K^2+B^4K^4LM\big)\delta_n^{1+\theta}
\end{split}
\end{equation*}
Therefore, appealing to the definition of $\delta$, that is, formula \eqref{delta_def}, and inequality \eqref{B_est}, we obtain
\begin{equation*}
\mdef_\Psi(T_{n+1})\leq \frac{1}{2}\delta^{-\theta}\delta_n^{1+\theta}=\frac{1}{2}\!\cdot\!2^{-(1+\theta)\omega_n}\delta=2^{-\omega_{n+1}}\delta=\delta_{n+1}.
\end{equation*}
By inequality \eqref{JT_norm}, we also obtain
\begin{equation*}
\begin{split}
\n{T_{n+1}} &=\n{\FF T_n}\leq \n{T_n}+\n{\mathsf{J}T_n^{\,\vee}}\\
&\leq K_n+\delta_n K_nLM\\
&=\beta_n(1+\delta_n LM)K=\beta_{n+1}K=K_{n+1},
\end{split}
\end{equation*}
which completes the induction.

Now, suppose that the inequality $(\mdef_\Psi(T_n))^{1+\theta}<\alpha$ is never satisfied, which means that our process does not terminate and we have $N=\infty$. Then, since for each $n\in\N$ we have
$$
\n{\mathsf{J}T_n^{\,\vee}}\leq \delta_n K_nLM<\delta_n BKLM\leq 2^{-n}\delta BKLM,
$$

\vspace*{1mm}\noindent
the sequence $(T_n)_{n=1}^\infty$ is a~Cauchy sequence, thus the norm limit $S=\lim_{n\to\infty}T_n$ exists. In view of (ii), we have $\lim_{n\to\infty}\mdef_\Psi(T_n)=0$ which yields $\mdef_\Psi(S)=0$. Moreover, $S-T=\sum_{n=0}^\infty \mathsf{J}T_n^{\,\vee}$ and hence
$$
\n{S-T}\leq\sum_{n=0}^\infty 2^{-n}\delta BKLM\leq 2\delta e^{2LM}KLM<\e,
$$
which means that in this case we have produced an~exact solution of the equation $S(xy)=S(x)\circ S(y)$ lying at distance smaller than $\e$ from $T$.

If our process terminates, i.e. $N<\infty$, then we have 
$$
\mdef_\Psi(T_N)<\alpha^{\frac{1}{1+\theta}}
$$
and, likewise in the previous case, 
$$
\n{T_N-T}\leq\sum_{n=0}^{N-1}\n{\mathsf{J}T_n^{\,\vee}}<\e.
$$
This completes the proof since for any $\eta\in (0,1)$ we can pick $\theta\in(0,1)$ so that $(1+\theta)^{-1}>1-\eta$ and then the parameter $\delta$ defined by \eqref{delta_def} does the job in the sense that whenever $T$ is as above, then the approximation $T_N$ satisfies $\mdef_\Psi(T_N)<\adef(\Psi)^{1-\eta}$. 
\end{proof}

\begin{remark}
It may be of some interest to ask if the power function $\alpha\mapsto\alpha^{1-\eta}$ appearing in Theorem~\ref{main_T} is optimal and, in general, how close to $\adef(\Psi)$ one can get with the multiplicative defect $\mdef_\Psi(S)$.
\end{remark}

\vspace*{2mm}\noindent
{\bf Acknowledgement. }The author acknowledges with gratitude the support from the National Science Centre, grant OPUS 19, project no.~2020/37/B/ST1/01052.

\bibliographystyle{amsplain}

\end{document}